\DeclareFontFamily{U}{wncy}{}
    \DeclareFontShape{U}{wncy}{m}{n}{<->wncyr10}{}
    \DeclareSymbolFont{mcy}{U}{wncy}{m}{n}
    \DeclareMathSymbol{\Sh}{\mathord}{mcy}{"58} 
\newcommand{\map}[3]{#1 \colon #2 \longrightarrow #3}
\newcommand{\abmap}[2]{#1 \longrightarrow #2}
\newcommand{\Q}{\mathbf{Q}}
\newcommand{\C}{\mathbf{C}}
\newcommand{\ZZ}{\mathbf{Z}}
\newcommand{\RRf}{\mathcal{R}_{\mathrm{fin}}}
\newcommand{\LL}{\mathcal{L}}
\newcommand{\ff}{\mathfrak{f}}
\newcommand{\mult}{^\times}
\newcommand{\qs}{{\textup{qs}}}
\newcommand{\Special}{{\textup{S}}}
\newcommand{\grouphom}{\lambda} 
\newcommand{\simil}{\mu} 
\DeclareMathOperator{\disc}{disc}
\DeclareMathOperator{\Gal}{Gal}
\DeclareMathOperator{\Hom}{Hom}
\DeclareMathOperator{\Span}{span}
\DeclareMathOperator{\GL}{GL}
\DeclareMathOperator{\SL}{SL}
\DeclareMathOperator{\GU}{GU}
\DeclareMathOperator{\SU}{SU}
\DeclareMathOperator{\GSp}{GSp}
\DeclareMathOperator{\Sp}{Sp}
\DeclareMathOperator{\SO}{SO}
\DeclareMathOperator{\OO}{O}
\DeclareMathOperator{\GO}{GO}
\DeclareMathOperator{\GSO}{GSO}
\DeclareMathOperator{\U}{U}
\DeclareMathOperator{\Image}{Im}
\DeclareMathOperator{\infl}{infl}
\DeclareMathOperator{\mm}{m}
\DeclareMathOperator{\Res}{Res}
\newcommand{\lsup}[1]{{}^{#1}}
\newcommand{\inv}{^{-1}}
\newcommand{\sfT}{\mathsf{T}}
\DeclareMathOperator{\cInd}{c-Ind}
\DeclareMathOperator{\Ind}{Ind}
\newcommand{\G}{{\mathrm G}}
\newcommand{\Z}{{\mathrm Z}}
\newtheorem{thm}{Theorem}
\newtheorem{lem}[thm]{Lemma}
\newtheorem{cor}[thm]{Corollary}
\newtheorem{conj}[thm]{Conjecture}
\theoremstyle{definition}
\newtheorem{rem}[thm]{Remark}
\title{Multiplicity upon restriction to the derived subgroup}
\author{Jeffrey D. Adler}
\address{
Department of Mathematics and Statistics \\
American University \\
4400 Massachusetts Ave NW \\
Washington, DC  20016-8050 \\
USA
}
\email{jadler@american.edu}
\author{Dipendra Prasad}
\address{
School of Mathematics \\
Indian Institute of Technology \\
Mumbai \\
India
}
\address{
Tata Institute of Fundamental Research \\
Mumbai  \\
India 
}
\address{
Laboratory of Modern Algebra and Applications \\
Saint-Petersburg State University \\
Russia
}
\email{prasad.dipendra@gmail.com}
\begin{document}
\begin{abstract}
We present a conjecture
on multiplicity of irreducible representations of a subgroup $H$
contained in the irreducible representations of a group $G$,
with $G$ and $H$ having the same derived groups.
We point out some consequences of the conjecture,
and verification of some of the consequences.
We give an explicit example of multiplicity $2$ upon restriction,
as well as certain theorems in the context of classical groups
where the multiplicity is $1$.
\end{abstract}
\maketitle

\section{Introduction}
Suppose $k$ is a local field,
$\G$ is a connected reductive $k$-group,
$\G'$ is a subgroup of $\G$ containing the derived group,
and $\pi$ is a smooth, irreducible, complex representation 
of $\G(k)$.
In an earlier work
\cite{adler-dprasad:mult-one},
the authors
showed that for many choices of $\G$,
the restriction $\Res^{\G(k)}_{\G'(k)}\pi$ decomposes without multiplicity.

A number of years ago, in the process of identifying situations
where multiplicity one did not  hold,
one of us discovered an example of a depth-zero supercuspidal representation of $\GU(2d,2d)$,
a $k$-quasisplit group,
whose restriction to $\SU(2d,2d)$ decomposes with
multiplicity two,
and the other formulated a conjecture in the form
of a reciprocity law involving
enhanced Langlands parameters.
In this paper, we present both the example and the conjecture, 
together with some consequences of the latter,
and a verification of some of those consequences.
Besides these, the paper proves several results by elementary means
involving classical groups where multiplicity one holds. 

A complete analysis of decomposition of the unitary principal
series for $\U(n,n)$ and its restriction to $\SU(n,n)$
was done by Keys
\cite{keys:unitary-even},
who also phrased his results in terms of ``reciprocity''
theorems for $R$-groups;
in particular, he found cases of multiplicity greater than one.

After presenting our conjecture (\S\ref{sec:conj}),
we give some of the heuristics
behind it.
In the formulation of the conjecture, we
have considered a more general situation than that of a subgroup.
We consider $\G_1$ and $\G_2$ to
be two connected reductive groups over a local field $k$,
and $\grouphom: \G_1\rightarrow \G_2$ a $k$-homomorphism
that is a central isogeny when restricted to their derived subgroups,
allowing us to ``restrict'' representations
of $\G_2(k)$ to $\G_1(k)$.
Since under such a homomorphism $\grouphom$,
the image of $\G_1(k)$ is a normal subgroup of $\G_2(k)$
with abelian quotient,
all the irreducible representations of $\G_1(k)$
which appear in this restriction problem for a given irreducible representation
of $\G_2(k)$ appear with the same multiplicity.
In \S\ref{sec:conj-rem},
we verify that for our conjectural multiplicity, this relationship does indeed hold.
We show (\S\ref{sec:tempered})
that if the conjecture is true for tempered representations,
then via the Langlands classification it holds for all representations.

Our conjecture (for $\grouphom: \G_1\rightarrow \G_2$ a $k$-homomorphism),
implies multiplicity one in situations
where Langlands parameters for $\G_1$ have abelian component groups.
We list a few such situations in \S\ref{sec:consequences}, and prove
multiplicity one for restriction
from $\GU(n)$ to $\U(n)$
(\S\ref{sec:unitary}).
Along the way, we prove multiplicity one in some other cases
where it follows from elementary considerations.
In \S\ref{sec:example-mult}, we present an
example of a depth-zero supercuspidal representation
of quasi-split $\GU(2d,2d)$ that decomposes with multiplicity two
upon restriction to $\SU(2d,2d)$.
Finally (\S\ref{sec:generalities}),
we give a general procedure for constructing higher 
multiplicities.

\section{The Conjecture on multiplicities}
\label{sec:conj}

Let $\G^{\qs}_1$ and $\G^{\qs}_2$ be two connected quasi-split reductive groups over a local field $k$
and let $\grouphom: \G^{\qs}_1\rightarrow \G^{\qs}_2$ be a $k$-homomorphism
that is a central isogeny when restricted to their derived subgroups.
In what follows we will be twisting $\G^{\qs}_1$ by a cohomology class in
$H^1(\Gal(\bar{k}/k),\G^{\qs}_1(\bar{k}))$ to construct a pure inner form $\G_1$ of
$\G^{\qs}_1$;
simultaneously,
by twisting $\G^{\qs}_2$ by the image of this class under the map
$H^1(\Gal(\bar{k}/k),\G^{\qs}_1(\bar{k})) \rightarrow 
H^1(\Gal(\bar{k}/k),\G^{\qs}_2(\bar{k}))$,
we will have a pure inner form $\G_2$ of
$\G^{\qs}_2$,
together with map of algebraic groups that we will still call
$\grouphom: \G_1\rightarrow \G_2$ which will appear
in considerations below, all coming from an element of
$H^1(\Gal(\bar{k}/k),\G^{\qs}_1(\bar{k}))$. 

The map $\grouphom: \G_1\rightarrow \G_2$ 
gives rise to a ``restriction'' map
from representations of $\G_2(k)$ to those of $\G_1(k)$,
and 
from Silberger \cite{silberger:isogeny-restriction}
one knows that the restriction of an irreducible
representation of $\G_2(k)$ is a finite direct sum of irreducible
representations of $\G_1(k)$.
In particular, we obtain a functor
$\grouphom^\star : \RRf(\G_2(k)) \rightarrow \RRf(\G_1(k))$,
where $\RRf(H)$ denotes the
category of smooth, finite-length representations of a group $H$.

Let ${}^L \G_1 = {\widehat {\G}}_1\rtimes W'_k$ and $ {}^L\G_2
= {\widehat {\G}}_2\rtimes W'_k$ be the $L$-groups associated to 
the quasi-split reductive groups $\G^{\qs}_1$ and $\G^{\qs}_2$ respectively.
The map $\grouphom: \G^{\qs}_1\rightarrow \G^{\qs}_2$ also gives rise to a homomorphism of $L$-groups:
$$
{}^L\grouphom: {}^L \G_2 \longrightarrow {}^L\G_1,
$$
as well as a homomorphism of their centers:
$$
{}^L \grouphom
:
Z({\widehat{\G}_2})^{W_k}\longrightarrow Z({{\widehat {\G}}_1})^{W_k}.
$$
In particular,
a character $\chi_1$ of $\pi_0(Z({{\widehat {\G}}_1})^{W_k})$
gives rise to a character $\chi_2$ of 
$\pi_0(Z({{\widehat {\G}}_2})^{W_k})$
which by the Kottwitz isomorphism
(assuming $k$ to be nonarchimedean at this point):
$$
H^1(\Gal(\bar{k}/k),\G^{\qs}_i(\bar{k}))
\cong
\Hom (\pi_0(Z({{\widehat\G}_i})^{W_k}), \Q/\Z),
$$
constructs pure inner forms $\G_1$ of $\G_1^{\qs}$ and $\G_2$ of $\G^{\qs}_2$,
together with a map $\grouphom: \G_1\rightarrow \G_2$ as before.

Let $\varphi_2: W'_k \rightarrow {}^L\G_2$,
and
$\varphi_1= {}^L\grouphom \circ \varphi_2 : W'_k \rightarrow {}^L\G_1$
be associated Langlands parameters,
where $W'_k = W_k\times \SL_2(\C)$, with $W_k$ the Weil group of $k$.
Then ${}^L\grouphom$ gives rise to a homomorphism
of centralizers of the images of the  parameters 
$\varphi_1$ with values in ${}^L\G_1$ and $\varphi_2$ with values in ${}^L\G_2$,
and also a homomorphism of the groups of connected components
of their centralizers:
$$
\pi_0({}^L \grouphom)
:
\pi_0(Z_{\widehat{\G}_2}(\varphi_2))
\longrightarrow
\pi_0(Z_{{\widehat {\G}}_1}(\varphi_1)).
$$
This allows one to `restrict' representations of
$\pi_0(Z_{{\widehat\G}_1}(\varphi_1))$
to representations of
$\pi_0(Z_{{\widehat\G}_2}(\varphi_2))$, giving rise to  the restriction functor 
$$
\grouphom_\star:
K_0(\pi_0(Z_{{\widehat\G}_1}(\varphi_1)))
\rightarrow
K_0(\pi_0(Z_{{\widehat\G}_2}(\varphi_2))),
$$
where $K_0(H)$ denotes the Grothendieck group of finite-length
representations of a group $H$.

The formulation of our conjecture below presumes that
the local Langlands correspondence involving enhanced Langlands parameters
has been achieved,
giving rise to a bijection between enhanced Langlands parameters 
and the set of isomorphism classes of irreducible admissible representations of 
all pure inner forms of quasi-split groups.
This will be needed for \emph{both} of the groups $\G_1$ and $\G_2$;
it is possible on the other hand
that one could reverse this role,
and use the conjectural multiplicity formula to construct
an enhanced Langlands parametrization for $\G_2$, knowing it for $\G_1$.

\begin{conj} 
\label{conj:main}
\begin{enumerate}[(a)]
\item
\label{item:conj1}
Let $\G_1$ and $\G_2$ be two connected reductive groups over a local field $k$
and $\grouphom: \G_1\rightarrow \G_2$, a $k$-homomorphism
that is a central isogeny when restricted to their derived subgroups.
For $i=1,2$,
let
$\pi_i$
be an irreducible admissible representation of $\G_i(k)$
with Langlands parameter $\varphi_i$. Let 
$$
\mm(\pi_2,\pi_1)
:= \dim \Hom_{\G_1(k)}[\pi_1, \grouphom^\star \pi_2]
=  \dim \Hom_{\G_1(k)}[\grouphom^\star \pi_2, \pi_1].
$$
Then 
$\mm(\pi_2,\pi_1) = 0$ unless
$\varphi_1 = {}^L \grouphom \circ \varphi_2$.
\item
Let $\G^{\qs}_1$ and $\G^{\qs}_2$ 
be two connected reductive quasi-split groups over a local field $k$
and $\grouphom: \G^{\qs}_1\rightarrow \G^{\qs}_2$, a $k$-homomorphism
that is a central isogeny when restricted to their derived subgroups.
Let $\varphi_1$ and $\varphi_2 $ be
Langlands parameters associated to the groups  
$\G^{\qs}_1$ and $\G^{\qs}_2$ with  
$\varphi_1 
= {}^L \grouphom \circ \varphi_2$, and let $\chi_i$
be characters of their component groups
$\pi_0(Z_{\widehat{\G}_i}(\varphi_i))$.
Then, if $\Hom_{\pi_0(Z(\varphi_2))}[ \chi_2, \grouphom_\star \chi_1]$ 
is nonzero, the characters $\chi_i$ define
pure inner forms $\G_i$ of  $\G^{\qs}_i$ together with a $k$-homomorphism, $\grouphom: \G_1\rightarrow \G_2$, 
as discussed earlier.
Then if $\pi_i = \pi(\varphi_i,\chi_i)$
are the corresponding irreducible admissible 
representations of $\G_i(k)$, we have
$$
\mm(\pi_2,\pi_1)
= \dim \Hom_{\pi_0(Z(\varphi_2))}[ \chi_2, \grouphom_\star \chi_1].
$$
\end{enumerate}
\end{conj}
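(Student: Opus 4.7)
The plan is to attack Conjecture \ref{conj:main} by combining Clifford/Mackey theory on the group side with the functoriality properties expected of the local Langlands correspondence enhanced by component-group characters. Using the reduction indicated in \S\ref{sec:tempered}, I would first pass via the Langlands classification to the tempered case. Because pure inner forms affect only the twists parameterized by $H^1(\Gal(\bar{k}/k),\G_i^\qs(\bar{k}))$ and the associated characters of $\pi_0(Z(\widehat{\G}_i)^{W_k})$, and since these twists are built into the map $\grouphom$ on the nose, it suffices to understand the quasi-split case together with its compatibility with the Kottwitz isomorphism.

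For part (\ref{item:conj1}), the strategy is to use stable characters. If $\pi_1$ appears in $\grouphom^\star\pi_2$, then on the image $\grouphom(\G_1(k))\subset \G_2(k)$ (a normal, cocompact-modulo-center subgroup of finite index after quotienting by the center) one has $\pi_2|_{\grouphom(\G_1(k))} = \bigoplus_\sigma \pi_1^\sigma$, where $\sigma$ ranges over a coset subgroup of $\G_2(k)/\grouphom(\G_1(k))\cdot Z_{\G_2(k)}$. Pulling back the Harish-Chandra character of $\pi_2$ through $\grouphom$, one obtains a compatibility between the stable distribution associated to $\varphi_2$ (transported via $\grouphom$) and a sum of stable distributions associated to parameters of $\G_1$. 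Given the expected linear independence of stable characters for distinct Langlands parameters on $\G_1(k)$, this forces $\varphi_1 = {}^L\grouphom\circ \varphi_2$, as required.

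For part (b), the approach is direct Clifford theory. Set $A_i := \pi_0(Z_{\widehat{\G}_i}(\varphi_i))$ and $\alpha := \pi_0({}^L\grouphom)\colon A_2 \to A_1$. Silberger's result \cite{silberger:isogeny-restriction} gives
\[
\grouphom^\star\pi_2 \;=\; \bigoplus_\rho \rho \otimes \Hom_{\G_1(k)}(\rho,\grouphom^\star\pi_2),
\]
with $\rho$ running over a $\G_2(k)$-orbit of irreducibles of $\G_1(k)$. The multiplicity $\mm(\pi_2,\pi_1)$ is then computed, via standard Mackey machinery, as a character pairing on the finite abelian quotient $\G_2(k)/\grouphom(\G_1(k))\cdot Z_{\G_2(k)}\cdot\stab(\pi_1)$. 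The task is to identify this finite quotient -- via the Kottwitz isomorphism and its dual -- with the cokernel of $\alpha$, and to identify the characters $\chi_i$ appearing as those which parametrize the decomposition data (pure inner form together with constituent inside the L-packet). Granting the enhanced Langlands correspondence for both $\G_1$ and $\G_2$, the desired equality becomes a compatibility statement purely on the dual side: $\dim\Hom_{A_2}[\chi_2,\alpha^*\chi_1]$ equals the Mackey count.

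The main obstacle is this matching between Mackey decomposition data on the group side and component-group characters on the dual side. Concretely, one must know that within an L-packet for $\varphi_1$ the conjugation action of $\G_2(k)$ on the $\chi_1$-component corresponds, under local Langlands, to twisting $\chi_1$ by the relevant subquotient of $A_1/\alpha(A_2)$. Such a normalization is built into Arthur's parametrization for classical groups, and is available for depth-zero supercuspidals through DeBacker-Reeder, but is not known in general; indeed, the conjecture essentially \emph{is} a precise normalization property for the enhanced correspondence. One would therefore proceed by either (i) restricting to settings where the correspondence is known and the action of $\G_2(k)/\grouphom(\G_1(k))\cdot Z_{\G_2(k)}$ on L-packets has been computed, as in the $\GU(n)\to\U(n)$ case of \S\ref{sec:unitary}, or (ii) using the multiplicity formula itself to \emph{define} the enhancement on one side from the other, as suggested in the remarks preceding the conjecture.
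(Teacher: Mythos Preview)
The statement is a \emph{conjecture}, and the paper does not prove it. What follows the statement in the paper is not a proof but a list of heuristics: items (1)--(4) verify that \emph{if} the multiplicity formula holds, \emph{then} the pullback of the stable character $\sum_{\chi_2}\chi_2(1)\Theta(\pi_{(\varphi_2,\chi_2)})$ from $\G_2(k)$ to $\G_1(k)$ equals $\sum_{\chi_1}\chi_1(1)\Theta(\pi_{(\varphi_1,\chi_1)})$, i.e., the formula is consistent with the expected stability of pulled-back stable distributions. This is a sanity check, not a proof strategy. Your proposal goes in the opposite logical direction for part~(\ref{item:conj1}): you want to use stability and linear independence of stable characters to deduce the parameter compatibility. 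That is a legitimate heuristic, but it rests on properties (stability of packet characters, linear independence across parameters) that are themselves part of the expected Langlands formalism and are not available in general.

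For part~(b), your Clifford/Mackey reduction is reasonable in spirit, and you correctly isolate the crux: one must match the action of $\G_2(k)/\grouphom(\G_1(k))Z_{\G_2(k)}$ on constituents of $\grouphom^\star\pi_2$ with the map $\alpha\colon A_2\to A_1$ on the dual side. But your proposed identification of the Mackey quotient with the cokernel of $\alpha$ is too coarse. Already for $(\G_1,\G_2)=(\SL_n,\GL_n)$ one has $A_2$ trivial, so the cokernel is all of $A_1$, yet the group-side quotient is $k^\times/(k^\times)^n$; these are not isomorphic, and the conjecture instead predicts multiplicity one via $\dim\Hom_{A_2}[\chi_2,\grouphom_\star\chi_1]=1$. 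The correct matching is more delicate than a quotient identification, and---as you yourself conclude---amounts precisely to a normalization property of the enhanced correspondence. So your ``proof'' ends where it begins: it reduces the conjecture to an equivalent desideratum of local Langlands rather than establishing it. This is not a flaw in your reasoning so much as an accurate diagnosis of why the statement is labeled a conjecture; note also the acknowledgment of Choiy's work, which proves the tempered case \emph{assuming} such expected properties.
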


The main heuristic for the conjectural multiplicity is the following.
\begin{enumerate}
\item
For any $L$-packet $\{\pi \}$ on any reductive group $\G(k)$
defined by a parameter $\varphi$,
thus $\{\pi\} = \{ \pi_{(\varphi, \chi)}\}$ where one takes those characters $\chi$ of the component group 
which have a particular restriction to $Z(\widehat{\G})^{W_k}$ defining the group $\G(k)$ assumed to be a  pure inner form
of a fixed quasi-split group $\G^{\qs}$,
$$
\sum _{\chi} \chi(1) \Theta(\pi_{(\varphi, \chi)})
$$
is a stable distribution on $\G(k)$.
Here, for any admissible representation $\pi$
we are letting $\Theta(\pi)$ denote
its character, regarded as a distribution on $\G(k)$.

\item
For a homomorphism $\grouphom: \G_1\rightarrow \G_2$ of reductive groups over $k$ 
which is an isogeny when restricted to their derived subgroups,
the pullback of a stable distribution on $\G_2(k)$ is a stable distribution 
on $\G_1(k)$.

\item
The restriction to $\G_1(k)$
of an irreducible
representation $\pi_2$ of $\G_2(k)$
is a finite-length (completely reducible)
representation of $\G_1(k)$,
whose irreducible components are all in the same $L$-packet.
This $L$-packet for $\G_1(k)$ depends only on the $L$-packet
for $\G_2(k)$ containing $\pi_2$.
If the Langlands parameter of our $L$-packet for $\G_2(k)$ is
$\varphi_2: W'_k \rightarrow {}^L\G_2$,
then the 
Langlands parameter of our $L$-packet for $\G_1(k)$ is
$\varphi_1:= {}^L\grouphom \circ \varphi_2 : W'_k \rightarrow {}^L\G_1$.
(This is part (\ref{item:conj1}) of the conjecture.)

\item
If Conjecture \ref{conj:main} is true,
then the pullback from $\G_2(k)$ to $\G_1(k)$
of the distribution
$$
\sum _{\chi_2} \chi_2(1) \Theta(\pi_{(\varphi_2, \chi_2)})
$$
where the sum is taken over those characters $\chi_2$ of the component group 
which have a particular restriction to $Z(\widehat{G}_2)^{W_k}$ 
defining the group $\G_2(k)$ assumed to be a 
pure inner form
of a fixed quasi-split group $\G_2^{\qs}(k)$, is a stable distribution on $\G_1(k)$ as we check now.

By Conjecture \ref{conj:main},
the pullback of the distribution
$\Theta_{\pi_2} = \Theta(\pi_{(\varphi_2, \chi_2)})$
on $\G_2(k)$ to $\G_1(k)$ is
$$
\sum_{\pi_1} m(\pi_2,\pi_1) \Theta(\pi_1)
=
\sum_{\chi_1}\Theta(\pi_{(\varphi_1, \chi_1)})
	\dim \Hom_{\pi_0(Z(\varphi_2))}[\chi_2, \grouphom_\star \chi_1].
$$
Therefore, the pullback to $\G_1(k)$ of the distribution
$\sum _{\chi_2} \chi_2(1) \Theta(\pi_{(\varphi_2, \chi_2)})$
on $\G_2(k)$
is (assuming Conjecture \ref{conj:main})
$$
\sum _{\chi_1, \chi_2}
	\chi_2(1) \Theta(\pi_{(\varphi_1, \chi_1)})
	\dim \Hom_{\pi_0(Z(\varphi_2))}[\chi_2, \grouphom_\star \chi_1],
$$
        which 
        is the same as
$$
\sum _{\chi_1,\chi_2}
	\Theta(\pi_{(\varphi_1, \chi_1)})
	\dim \Hom_{\pi_0(Z(\varphi_2))}[ \chi_2(1) \chi_2, \grouphom_\star \chi_1],
$$
where the sum is taken over all pairs of characters $\chi_1,\chi_2$ with particular restrictions to
$Z(\widehat{\G}_1)^{W_k}$ 
and $Z(\widehat{\G}_2)^{W_k}$.
Observe that
those characters $\chi_2$ whose restrictions to $Z(\widehat{\G}_2)^{W_k}$
are not compatible with the restriction of $\chi_1$ to $Z(\widehat{\G}_1)^{W_k}$
contribute $0$ to the sum.
Therefore, we can take the sum over all $\chi_2$.  The sum then is the same as
\begin{equation}
\tag{$*$}
\sum _{\chi_1}
	\Theta(\pi_{(\varphi_1, \chi_1)})
	\dim \Hom_{\pi_0(Z(\varphi_2))}[R, \grouphom_\star \chi_1],
\end{equation}
where $R = \sum\chi_2(1) \chi_2$
is the regular representation of $\pi_0(Z(\varphi_2))$.

By Schur orthogonality,
$$
\dim\Hom_{\pi_0(Z(\varphi_2))} [\chi_2, \grouphom_\star \chi_1]
=
\frac{1}{|\pi_0(Z(\varphi_2))|}
\sum_{g \in \pi_0(Z(\varphi_2)) }\chi_1(\grouphom^\star g) \bar{\chi}_2(g),
$$
where $\grouphom^\star$ denotes the map
$\map{\pi_0( \lsup L \grouphom)}{\pi_0(Z(\varphi_2))}{\pi_0(Z(\varphi_1))}$.
So
\begin{align*} 
  \dim\Hom_{\pi_0(Z(\varphi_2))}
          [R, \grouphom_\star \chi_1 ]
& = \frac{1}{|\pi_0(Z(\varphi_2))|}
	\sum_{g\in \pi_0(Z(\varphi_2))}\chi_1( \grouphom^\star g) \chi_R(g),
\end{align*}
where $R$ is the regular representation of $\pi_0(Z(\varphi_2))$
and $\chi_R$ its character,
thus
$$
\chi_R(g) =
\begin{cases}
0 & \text{if $g$ is not the identity}, \\
|\pi_0(Z(\varphi_2))| & \text{if $g$ is the identity}.
\end{cases}
$$
Therefore,
$$
\dim\Hom_{\pi_0(Z(\varphi_2))}[ R, \grouphom_\star \chi_1 ]
=
\chi_1(1).
$$
By $(*)$
it follows that the pullback of the distribution
$\sum _{\chi_2} \chi_2(1) \Theta(\pi_{(\varphi_2, \chi_2)})$ on $\G_2(k)$
to $\G_1(k)$ is equal to 
$\sum _{\chi_1} \chi_1(1) \Theta(\pi_{(\varphi_1, \chi_1)})$,
where the sum is taken over those $\chi_1$ 
with a given restriction to $Z(\widehat{\G}_1)^{W_k}$.
Thus the  pullback of the distribution
$\sum _{\chi_2} \chi_2(1) \Theta(\pi_{(\varphi_2, \chi_2)})$ on $\G_2(k)$
to $\G_1(k)$  is a stable distribution on $\G_1(k)$ which is what we set out to prove.
\end{enumerate}

\begin{rem}
A weaker version of our conjecture says that the pullback to $\G_1(k)$
of the stable character
$\sum_\chi \chi(1)\Theta_\chi$ on $\G_2(k)$
is
$\sum_\mu \mu(1)\Theta_\mu$ on $\G_1(k)$,
where both of the sums are over the characters of component groups
defining fixed pure inner forms that are $\G_2$ and $\G_1$, respectively.
\end{rem}

\section{Some remarks on the multiplicity formula}
\label{sec:conj-rem}
Conjecture \ref{conj:main}
relating $\mm(\pi_2,\pi_1)$
with $\dim \Hom_{\pi_0(Z(\varphi_2))}[ \grouphom_\star \chi_1, \chi_2]$
can be considered as a set of assertions keeping $\pi_2$ fixed
and varying $\pi_1$,
or keeping $\pi_1$ fixed and varying $\pi_2$, say inside an
$L$-packet for $\G_2(k)$.
It is easy to see that for
$\G_1$ and $\G_2$ two reductive groups over a local field $k$,
and $\grouphom: \G_1\rightarrow \G_2$, a $k$-homomorphism
that is a central isogeny when restricted to their derived subgroups,
the image of $\G_1(k)$ inside $\G_2(k)$
is a normal subgroup,
and therefore every irreducible representation of $\G_1(k)$
that appears inside a given
irreducible representation $\pi_2$
of $\G_2(k)$ does so with the same multiplicity
(depending of course on $\pi_2$).
This section aims to prove this as a
consequence of our Conjecture \ref{conj:main}.

This section is meant to prove that
$\dim \Hom_{\pi_0(Z(\varphi_2))}[ \grouphom_\star \chi_1, \chi_2]$
remains constant when 
$\chi_2$ is a fixed character of $\pi_0(Z(\varphi_2))$
but
$\chi_1$ varies among characters of $\pi_0(Z(\varphi_1))$.
This is achieved by combining
Corollary \ref{invariance} with Lemma \ref{components}.
We begin with the following lemma whose straightforward proof
will be omitted.
      
\begin{lem}
Let $N$ be a normal subgroup of a finite group $G$ with $A=G/N$
an abelian group.
Let $\pi$ be an irreducible representation of $N$.
Then any two irreducible representations $\pi_1$ and $\pi_2$ of $G$
containing $\pi$ on restriction
to $N$ are twists of each other by characters of $G/N$, i.e.,
$$
\pi_2 \cong \pi_1 \otimes \chi,
$$
for $\map{\chi}{G/N}{\mathbb C\mult}$.
\end{lem}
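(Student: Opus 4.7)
The plan is to exploit the abelian quotient $A = G/N$ by forming the finite-dimensional complex vector space $V := \Hom_N(\pi_1, \pi_2)$, equipping it with a natural $G$-action that factors through $A$, and using its spectral decomposition to extract the desired twisting character.

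First I would verify that $V \neq 0$. Since $\pi$ embeds as a subrepresentation of $\pi_1|_N$ and, by semisimplicity of finite-group representations, also appears as a quotient of $\pi_2|_N$, composing these two $N$-maps produces a non-zero element of $V$. Next I would define the $G$-action on $V$ by the standard formula $(g \cdot f)(v) = \pi_2(g)\, f(\pi_1(g)^{-1} v)$ and confirm the two routine checks: the normality of $N$ combined with the $N$-equivariance of $f$ implies that $g \cdot f$ is again $N$-equivariant, and for $n \in N$ the $N$-equivariance of $f$ forces $n \cdot f = f$. Thus the action descends to a representation of the abelian group $A$.

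Because $A$ is finite abelian, $V$ decomposes as a direct sum of one-dimensional character eigenspaces. Choosing any $\chi \in \widehat{A}$ whose eigenspace is non-zero, and inflating $\chi$ to a character of $G$, I would unwind the definitions to recognize this eigenspace as precisely $\Hom_G(\pi_1, \pi_2 \otimes \chi^{-1})$: the condition $g \cdot f = \chi(g) f$ is equivalent to $f \circ \pi_1(g) = \chi(g)^{-1} \pi_2(g) \circ f$. Since $\pi_1$ and $\pi_2 \otimes \chi^{-1}$ are both irreducible $G$-representations and this Hom space is non-zero, Schur's lemma yields $\pi_2 \cong \pi_1 \otimes \chi$, as claimed.

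No step of this argument poses a genuine obstacle; as the authors signal by omitting the proof, the entire content is the observation that a non-zero representation of a finite abelian group must contain some character, together with the bookkeeping needed to identify the $\chi$-eigenspace with a space of $G$-intertwiners to a twisted target.
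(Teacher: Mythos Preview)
Your argument is correct and complete in substance. The paper explicitly omits the proof as ``straightforward,'' so there is no approach on the paper's side to compare against; your route via the natural $G$-action on $\Hom_N(\pi_1,\pi_2)$ is one of the standard ways to see this.

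One small bookkeeping slip: with $V=\Hom_N(\pi_1,\pi_2)$ you need a nonzero $N$-map $\pi_1\to\pi_2$, but the two maps you name (an embedding $\pi\hookrightarrow\pi_1|_N$ and a quotient $\pi_2|_N\twoheadrightarrow\pi$) compose in the wrong direction, landing in $\Hom_N(\pi_2,\pi_1)$. Of course semisimplicity lets you reverse either arrow, so this is a typo-level issue and not a gap.
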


\begin{cor} \label{invariance}
If $N$ is a normal subgroup of a group $G$
with $A=G/N$ a finite abelian group,
and $\pi$ an irreducible representation of $N$,
then all irreducible $G$-submodules of $\Ind_N^G(\pi)$
appear in it with the same multiplicity.
\end{cor}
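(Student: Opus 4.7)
The plan is to reduce the statement directly to the preceding lemma by means of Frobenius reciprocity, so essentially no work remains beyond bookkeeping. Let $\sigma_1$ and $\sigma_2$ be two irreducible $G$-submodules of $\Ind_N^G(\pi)$. Applying Frobenius reciprocity to each $\sigma_i$, one sees that $\pi$ occurs as a subrepresentation of $\Res^G_N \sigma_i$ for both $i=1,2$.

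The preceding lemma then supplies a character $\map{\chi}{G/N}{\mathbb C\mult}$ such that $\sigma_2 \cong \sigma_1 \otimes \chi$. Since $\chi$ factors through $G/N$, it is trivial on $N$, so the $N$-modules $\Res^G_N \sigma_1$ and $\Res^G_N \sigma_2$ are isomorphic. Frobenius reciprocity then yields
$$
\dim\Hom_G(\sigma_i,\Ind_N^G \pi) \;=\; \dim\Hom_N(\Res^G_N\sigma_i, \pi),
$$
and the right-hand sides for $i=1,2$ coincide. Since the left-hand side is exactly the multiplicity of $\sigma_i$ in $\Ind_N^G(\pi)$, the two multiplicities agree.

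There is essentially no obstacle: the whole argument is two applications of Frobenius reciprocity sandwiching the lemma. The one point worth flagging is that we are using that $\chi$ is trivial on $N$ (so that the restrictions of $\sigma_1$ and $\sigma_2$ to $N$ coincide), which is automatic from the fact that $\chi$ is a character of $G/N$. The finiteness of $A = G/N$ is used only implicitly, to guarantee that induction from $N$ to $G$ is well behaved and that Frobenius reciprocity applies in the usual form.
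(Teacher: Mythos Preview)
Your argument is correct and is exactly the deduction the paper has in mind: the corollary is stated without proof, immediately after the lemma, and the intended derivation is precisely the two applications of Frobenius reciprocity sandwiching the lemma that you wrote out. The only cosmetic wrinkle is that the lemma is phrased for finite $G$ while the corollary assumes only $G/N$ finite, but the lemma's (omitted, ``straightforward'') proof works verbatim under the weaker hypothesis, so there is no actual gap.
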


\begin{lem} \label{components}
Let $\G_1$ and $\G_2$ be two connected reductive groups over a local field $k$,
and let
$\map{\grouphom}{\G_1}{\G_2}$ be a $k$-homomorphism
that is a central isogeny when restricted to their derived subgroups,
and giving rise to a homomorphism 
$\map{\lsup L\grouphom}{\lsup L \G_2}{\lsup L\G_1}$
of the $L$-groups. 
Let $\varphi_2: W'_k \rightarrow {}^L\G_2$,
and
$\varphi_1= {}^L\grouphom \circ \varphi_2 : W'_k \rightarrow {}^L\G_1$
be associated Langlands parameters. 
Then for the associated homomorphism of finite groups
$\map{\grouphom^\star}%
{\pi_0(Z_{\widehat{\G}_2}(\varphi_2))}{\pi_0(Z_{{\widehat {\G}}_1}(\varphi_1))}$,
 the image is normal with abelian cokernel.
\end{lem}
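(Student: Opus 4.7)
Let $Z_i := Z_{\widehat{\G}_i}(\varphi_i)$ for brevity, and write $\widehat{\grouphom}: \widehat{\G}_2 \to \widehat{\G}_1$ for the restriction of $\lsup L\grouphom$ to $\widehat{\G}_2$. The plan is to prove directly that $[Z_1, Z_1] \subseteq \widehat{\grouphom}(Z_2)$; both conclusions of the lemma will then follow in one line. For the setup, the hypothesis dualizes to say that $\widehat{\grouphom}$ has central kernel and restricts to a central isogeny $\widehat{\G}_2^{\mathrm{der}} \to \widehat{\G}_1^{\mathrm{der}}$. Since two centrally isogenous semisimple groups share a common adjoint quotient, and $\widehat{\G}_i^{\mathrm{ad}} = (\widehat{\G}_i^{\mathrm{der}})^{\mathrm{ad}}$, the induced map on adjoints $\widehat{\grouphom}^{\mathrm{ad}}: \widehat{\G}_2^{\mathrm{ad}} \to \widehat{\G}_1^{\mathrm{ad}}$ is a $W_k$-equivariant isomorphism carrying $\bar\varphi_2$ to $\bar\varphi_1$.

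The heart of the proof is a lift-and-commute argument. Given $g_1, g_2 \in Z_1$, let $\bar g_i$ denote the image of $g_i$ in $\widehat{\G}_1^{\mathrm{ad}}$; by the isomorphism above, $\bar g_i$ also lies in $Z_{\widehat{\G}_2^{\mathrm{ad}}}(\bar\varphi_2)$. Choose any preimages $h_i \in \widehat{\G}_2$ of $\bar g_i$ under the surjection $\widehat{\G}_2 \twoheadrightarrow \widehat{\G}_2^{\mathrm{ad}}$. Then each $h_i$ commutes with $\varphi_2$ modulo $Z(\widehat{\G}_2)$: the defect $c_i(w) := h_i \varphi_2(w) h_i^{-1} \varphi_2(w)^{-1}$ lies in $Z(\widehat{\G}_2)$ for all $w \in W'_k$. (On the $\SL_2(\C)$-factor, $w \mapsto c_i(w)$ is a homomorphism from a perfect group to an abelian group, hence trivial, so only $w \in W_k$ carries real content.) Using that $Z(\widehat{\G}_2)$ is central and abelian in $\widehat{\G}_2$, a direct commutator calculation shows that the defects cancel: $[h_1, h_2]\,\varphi_2(w)\,[h_1, h_2]^{-1} = \varphi_2(w)$ for all $w$, so $[h_1, h_2] \in Z_2$. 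Since $\widehat{\grouphom}(h_i)$ and $g_i$ have the same image in $\widehat{\G}_1^{\mathrm{ad}}$, they differ by an element of $Z(\widehat{\G}_1)$, which drops out of the commutator, yielding
\[
[g_1, g_2] \;=\; \widehat{\grouphom}\bigl([h_1, h_2]\bigr) \;\in\; \widehat{\grouphom}(Z_2).
\]

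From $[Z_1, Z_1] \subseteq \widehat{\grouphom}(Z_2)$ the conclusions follow immediately: the commutator subgroup of $\pi_0(Z_1)$ is contained in the image of $\grouphom^\star$, so the cokernel is abelian; and for normality, given $g \in Z_1$ and $\widehat{\grouphom}(h) \in \widehat{\grouphom}(Z_2)$, the rewriting $g\,\widehat{\grouphom}(h)\,g^{-1} = \widehat{\grouphom}(h) \cdot [\widehat{\grouphom}(h)^{-1}, g]$ places the conjugate in $\widehat{\grouphom}(Z_2) \cdot [Z_1, Z_1] \subseteq \widehat{\grouphom}(Z_2)$.

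The main obstacle is the commutator cancellation in the middle step. It is a direct computation in the semidirect product $\widehat{\G}_2 \rtimes W_k$, but it relies crucially on the centrality and commutativity of the defects $c_i(w)$; the conceptual key is the lift via the common adjoint quotient, which makes the argument uniform in $g_1, g_2 \in Z_1$, not just in $g_i$ lying in $\widehat{\grouphom}(\widehat{\G}_2)$ (where a more naive lift would suffice).
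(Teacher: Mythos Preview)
Your proof is correct and takes a genuinely different route from the paper's. The paper reduces to the two cases where $\grouphom$ is injective or surjective; in the injective case (the one written out), $\widehat{\grouphom}$ is surjective with central kernel $\widehat{Z}$, and the authors run the long exact sequence of (nonabelian) $W'_k$-cohomology attached to $1\to\widehat{Z}\to\widehat{\G}_2\to\widehat{\G}_1\to 1$ to exhibit $Z_{\widehat{\G}_1}(\varphi_1)\big/\bigl(Z_{\widehat{\G}_2}(\varphi_2)/\widehat{Z}^{W'_k}\bigr)$ as a subgroup of the abelian group $H^1(W'_k,\widehat{Z})$, then pass to $\pi_0$. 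Your argument instead avoids any case split and any cohomology: you pass to the common adjoint quotient $\widehat{\G}_1^{\mathrm{ad}}\cong\widehat{\G}_2^{\mathrm{ad}}$, lift arbitrary $g_1,g_2\in Z_1$ to $h_1,h_2\in\widehat{\G}_2$ modulo center, and verify by a direct commutator computation that the central defects cancel, yielding $[Z_1,Z_1]\subseteq\widehat{\grouphom}(Z_2)$. What each approach buys: the paper's cohomological argument actually identifies the cokernel as sitting inside $H^1(W'_k,\widehat{Z})$, which is extra structural information beyond the bare statement of the lemma; your argument is shorter, more elementary, and uniform in $\grouphom$. One small point worth making explicit in your write-up: the passage from $[Z_1,Z_1]\subseteq\widehat{\grouphom}(Z_2)$ to the statement about $\pi_0$ uses that the image of $\grouphom^\star$ in $\pi_0(Z_1)$ is $\widehat{\grouphom}(Z_2)\,Z_1^\circ/Z_1^\circ$, which still contains the commutator subgroup of $\pi_0(Z_1)$.
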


\begin{proof}
It suffices to prove the lemma separately in the two cases:
\begin{enumerate}
\item
$\map{\grouphom}{\G_1}{\G_2}$ is injective as a homomorphism of algebraic groups.
\item
$\map{\grouphom}{\G_1}{\G_2}$ is surjective as a homomorphism of algebraic groups.
\end{enumerate}
We will do only the first case, the other being very similar.

Assume then that
$\map{\grouphom}{\G_1}{\G_2}$ is injective, and thus
$\map{\widehat{\grouphom}}{\widehat{\G}_2}{\widehat{\G}_1}$
is surjective with kernel say
$\widehat{Z}$.
Use $\map{\varphi_2}{W'_k}{\lsup L\G_2}$
and
$\varphi_1= \lsup L\grouphom \circ \varphi_2:W'_k \longrightarrow \lsup L\G_1$
to give
$\widehat{\G}_2$ and $\widehat{\G}_1$ a $W'_k$-group structure
such that we have an exact sequence of $W'_k$-groups:
$$
1
\longrightarrow \widehat{Z}
\longrightarrow \widehat{\G}_2
\longrightarrow \widehat{\G}_1
\longrightarrow 1.
$$
This gives rise to a long exact sequence of $W'_k$-cohomology sets:
$$
1
\longrightarrow \widehat{Z}^{W'_k}
\longrightarrow \widehat{\G}^{W'_k}_2
\longrightarrow \widehat{\G}^{W'_k}_1
\longrightarrow H^1(W'_k, \widehat{Z})
\longrightarrow \cdots.
$$
Equivalently, we have the exact sequence of groups:
$$
1
\longrightarrow Z_{\widehat{\G}_2}(\varphi_2) /\widehat{Z}^{W'_k}
\longrightarrow Z_{\widehat{\G}_1}(\varphi_1)
\longrightarrow A
\longrightarrow 1,
$$
where $A \subset H^1(W'_k, \widehat{Z}) $, a locally compact abelian group.
Taking $\pi_0$ of the terms in the above exact sequence
which all fit together in a long exact sequence of
$\pi_i$'s (higher homotopy groups),
the assertion in the lemma follows on noting that if $E_1 \rightarrow E_2$
is a surjective map of locally compact and locally connected
topological groups, then the induced map $\pi_0(E_1) \rightarrow \pi_0(E_2)$
is also surjective.
\end{proof}

\section{Reduction of the conjecture to the case of tempered representations}
\label{sec:tempered}
As before,
let $\G_1$ and $\G_2$ be two reductive groups over a local field $k$,
and let $\grouphom: \G_1\rightarrow \G_2$ be a $k$-homomorphism
that is a central isogeny when restricted to their derived subgroups,
giving rise to  the restriction functor
$\grouphom^\star : \RRf(\G_2(k)) \rightarrow \RRf(\G_1(k))$.

\begin{lem}
\label{lem:max-quotient}
Let $V$ be a finite-length representation of $\G_2(k)$
with maximal semi-simple quotient $Q$.
Then $\grouphom^\star Q$ is
the maximal semi-simple quotient of $\grouphom^\star V$, a finite-length 
representation of $G_1(k)$.
\end{lem}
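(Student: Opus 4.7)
The plan is to reduce the claim to showing the equality of $\G_1(k)$-submodules $\mathrm{rad}(\grouphom^\star V) = \grouphom^\star K$ where $K := \mathrm{rad}(V)$. Since restriction along $\grouphom$ is exact, applying it to the defining sequence $0 \to K \to V \to Q \to 0$ yields $0 \to \grouphom^\star K \to \grouphom^\star V \to \grouphom^\star Q \to 0$. By Silberger's theorem (cited in the introduction), each irreducible $\G_2(k)$-summand of $Q$ restricts to a finite-length semisimple $\G_1(k)$-representation, so $\grouphom^\star Q$ is semisimple and gives a semisimple quotient of $\grouphom^\star V$; maximality of this quotient is exactly the desired equality of radicals.

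One inclusion, $\mathrm{rad}(\grouphom^\star V) \subseteq \grouphom^\star K$, is immediate: the restriction of any $\G_2$-semisimple quotient of $V$ (in particular $Q$) is $\G_1$-semisimple, so the $\G_1$-radical is contained in the kernel $K$. For the reverse inclusion, I would fix an $\G_1(k)$-equivariant surjection $f \colon \grouphom^\star V \twoheadrightarrow \sigma$ onto an irreducible $\sigma$, and show $K \subseteq \ker f$. The key construction is the $\G_2(k)$-stable submodule $N := \bigcap_{g \in \G_2(k)} g \cdot \ker f$. Each translate $g \cdot \ker f$ is itself an $\G_1(k)$-submodule because $\grouphom(\G_1(k))$ is normal in $\G_2(k)$; moreover, since $V$ has finite length and the $\G_2(k)/\G_1(k)$-action on isomorphism classes of $\G_1$-irreducible composition factors of $V$ has only finite orbits, the intersection stabilizes after finitely many terms. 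Hence $V/N$ embeds into the finite direct sum $\bigoplus_g \sigma^g$ of $\G_1$-conjugates of $\sigma$, making it semisimple as $\G_1(k)$-representation with composition factors in a single $\G_2(k)$-orbit.

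The final, and main, step is to upgrade $\G_1(k)$-semisimplicity of $V/N$ to $\G_2(k)$-semisimplicity; once this is established, $N$ is a $\G_2$-submodule of $V$ with semisimple quotient, forcing $N \supseteq \mathrm{rad}(V) = K$ and hence $K \subseteq \ker f$. This upgrade is a Clifford-theoretic statement in the smooth category: the $\G_1$-isotypic decomposition of $V/N$ is permuted by $\G_2(k)$, the stabilizer of an isotypic class has finite index (because the $\G_2(k)$-orbit is finite), and Mackey-style induction from this stabilizer reduces the problem to semisimplicity of a finite-dimensional projective representation of a finite quotient, where Maschke's theorem applies. The \emph{main obstacle} is precisely this Clifford-theoretic transfer of semisimplicity from $\G_1(k)$ to $\G_2(k)$, which uses both the normality of $\grouphom(\G_1(k))$ and the finiteness of $\G_2(k)/\G_1(k)$-orbits on $\G_1$-irreducibles in essential ways.
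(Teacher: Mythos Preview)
Your approach matches the paper's: both reduce to the claim that a finite-length $\G_2(k)$-module is semisimple if and only if its restriction to $\G_1(k)$ is. The paper's proof is much terser---it simply asserts this biconditional, remarks that it is easy when the image of $\G_1(k)$ together with the center has finite index in $\G_2(k)$ (as in characteristic zero), and invokes Silberger. Your construction of $N=\bigcap_g g\cdot\ker f$ and the Clifford-theoretic upgrade unpack this correctly; a slightly slicker variant, implicit in the paper's formulation, is to note directly that the $\G_1(k)$-radical of $V$ is already $\G_2(k)$-stable (normality lets $\G_2(k)$ permute the maximal $\G_1(k)$-submodules), so one need not build $N$ separately for each $f$.

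One point to tighten: your claim that the Clifford step reduces to a projective representation of a \emph{finite} quotient is only clear in the finite-index situation. Finiteness of the $\G_2(k)$-orbit on $\G_1$-isotypic components gives only that the stabilizer $H$ has finite index in $\G_2(k)$; it does not force $H/\grouphom(\G_1(k))$ to be finite in positive characteristic, so Maschke does not apply directly. The paper is equally brief on this point, handling only the finite-index case explicitly.
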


\begin{proof}
It suffices to observe that a finite-length representation of $\G_2(k)$
is semisimple if and only if its image under $\grouphom^\star$
is a finite-length, semi-simple representation of  $\G_1(k)$.
If $\Z(\G_1)(k)\cdot\G_1(k)$
is of finite index in $\G_2(k)$, such as when $k$ is of characteristic
zero, then this is easy to see.
By a
theorem of Silberger \cite{silberger:isogeny-restriction},
irreducible representations of $\G_2(k)$ remain
finite-length semi-simple representations when restricted to $\G_1(k)$,  
and the lemma follows in general.
\end{proof}

To set up the next result,
let $P_2=M_2N_2$ be a Levi factorization of a parabolic subgroup
in $\G_2$.
If we let 
$P_1=\grouphom\inv(P_2)$,
$M_1=\grouphom\inv(M_2)$,
and
$N_1=\grouphom\inv(N_2)$,
then
$P_1=M_1N_1$ is a Levi factorization of a parabolic subgroup
in $\G_1$.
Then $\map{\grouphom}{M_1}{M_2}$ gives us a restriction functor
$\RRf(M_2(k)) \rightarrow \RRf(M_1(k))$
that we will also denote by $\grouphom^\star$.
Since $\grouphom$ gives an isomorphism
$\abmap{\G_1(k)/P_1(k)}{\G_2(k)/P_2(k)}$,
we have  the following commutative diagram:
\[
\begin{xy}
\xymatrix{
\RRf(\G_2(k)) \ar[r]^{\grouphom^\star} & \RRf(\G_1(k)) \\
\RRf(M_2(k)) \ar[r]^{\grouphom^\star}
\ar[u]^{\Ind_{P_2(k)}^{\G_2(k)}}
&
\RRf(M_1(k))
\ar[u]^{\Ind_{P_1(k)}^{\G_1(k)}}
\\
}
\end{xy}
\]

\begin{lem}
\label{lem:langlands-quotient}
Let $\sigma_2$ be an irreducible,
essentially tempered representation of $M_2(k)$
with strictly positive exponents along the center $\Z(M_2)(k)$ of $M_2(k)$.
Write
$$
\grouphom^\star \sigma_2
=
\sum_\alpha m_\alpha \sigma_{1,\alpha},
$$
a sum of irreducible, essentially tempered representations of $M_1(k)$
with (finite) multiplicities $m_\alpha$.
Let $\pi_2$ be the Langlands quotient of the standard module
$\Ind_{P_2(k)}^{\G_2(k)} \sigma_2$,
and $\pi_{1,\alpha}$ the Langlands quotients of
$\Ind_{P_1(k)}^{\G_1(k)} \sigma_{1,\alpha}$.
Then
$$
\grouphom^\star \pi_2 = \sum_\alpha m_\alpha \pi_{1,\alpha}.
$$
\end{lem}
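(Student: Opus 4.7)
The plan is to combine the commutative diagram displayed just before the lemma with Lemma~\ref{lem:max-quotient} on maximal semisimple quotients, together with the uniqueness of the Langlands quotient. First I would apply that commutative diagram to the representation $\sigma_2$ to obtain the identity
\[
\grouphom^\star \Ind_{P_2(k)}^{\G_2(k)} \sigma_2
= \Ind_{P_1(k)}^{\G_1(k)} \grouphom^\star \sigma_2
= \sum_\alpha m_\alpha\, \Ind_{P_1(k)}^{\G_1(k)} \sigma_{1,\alpha}
\]
in $\RRf(\G_1(k))$. Since $\pi_2$ is the Langlands quotient of the left-hand standard module, i.e., its unique irreducible quotient (hence also its maximal semisimple quotient), Lemma~\ref{lem:max-quotient} identifies $\grouphom^\star \pi_2$ with the maximal semisimple quotient of the right-hand representation.

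The next step is to verify that the Langlands classification also applies to each summand $\Ind_{P_1(k)}^{\G_1(k)}\sigma_{1,\alpha}$, so that this standard module has $\pi_{1,\alpha}$ as its unique irreducible quotient. Concretely, I would check that each $\sigma_{1,\alpha}$ is irreducible essentially tempered (this is inherited from $\sigma_2$ because essential temperedness is detected by the absolute value of the central character on the split component of the center, and constituents of $\grouphom^\star \sigma_2$ inherit the central character of $\sigma_2$ up to the central isogeny of derived subgroups), and that the exponents of $\sigma_{1,\alpha}$ along $\Z(M_1)(k)$ remain strictly positive. The positivity transfers because $P_1 = \grouphom\inv(P_2)$ and $M_1 = \grouphom\inv(M_2)$ force the roots of $M_i$ in $\G_i$ to correspond under the central isogeny, so the positive Weyl chamber defining the Langlands data is preserved.

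Granted these verifications, each $\Ind_{P_1(k)}^{\G_1(k)}\sigma_{1,\alpha}$ has a unique irreducible quotient $\pi_{1,\alpha}$, so the maximal semisimple quotient of the direct sum $\sum_\alpha m_\alpha\, \Ind_{P_1(k)}^{\G_1(k)}\sigma_{1,\alpha}$ is exactly $\sum_\alpha m_\alpha \pi_{1,\alpha}$, and the lemma follows by combining this with the identification of $\grouphom^\star \pi_2$ from the first step. The main obstacle I anticipate is the verification of the Langlands positivity condition for each $\sigma_{1,\alpha}$: it requires carefully relating the split centers and root data on the two sides of the isogeny $\grouphom$, but it is the only non-formal point in the argument, the rest being a straightforward chase through the commutative diagram together with Lemma~\ref{lem:max-quotient}.
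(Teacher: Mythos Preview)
Your proposal is correct and follows essentially the same route as the paper: apply the commutative diagram to get $\grouphom^\star \Ind_{P_2(k)}^{\G_2(k)}\sigma_2 = \sum_\alpha m_\alpha \Ind_{P_1(k)}^{\G_1(k)}\sigma_{1,\alpha}$, then invoke Lemma~\ref{lem:max-quotient} together with the fact that taking maximal semisimple quotients commutes with direct sums. The paper's proof is terser only because it treats the verification that each $\sigma_{1,\alpha}$ is valid Langlands data as implicit in the statement of the lemma (which already speaks of ``the Langlands quotients of $\Ind_{P_1(k)}^{\G_1(k)}\sigma_{1,\alpha}$''), whereas you spell out why this holds; but the logical skeleton is identical.
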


\begin{proof}
Clearly,
$$
\grouphom^\star
\Ind_{P_2(k)}^{\G_2(k)} \sigma_2
=
\Ind_{P_1(k)}^{\G_1(k)} \grouphom^\star \sigma_2
=
\sum_\alpha m_\alpha \Ind_{P_1(k)}^{\G_1(k)} \sigma_{1,\alpha}.
$$
Since ``taking maximal semi-simple quotient''
commutes with direct sum, our result follows from
Lemma \ref{lem:max-quotient}.
\end{proof}

\begin{cor}
If Conjecture \ref{conj:main} is true for tempered representations,
then it is true in general.
\end{cor}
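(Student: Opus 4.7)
The plan is to reduce Conjecture \ref{conj:main} for general irreducible admissible representations to the tempered case via the Langlands classification and Lemma \ref{lem:langlands-quotient}.

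Given an irreducible admissible representation $\pi_2$ of $\G_2(k)$, I would write $\pi_2$ as the Langlands quotient of a standard module $\Ind_{P_2(k)}^{\G_2(k)} \sigma_2$, where $P_2 = M_2 N_2$ and $\sigma_2$ is essentially tempered with strictly positive exponents. Setting $P_1 = \grouphom\inv(P_2) = M_1 N_1$ as in the setup preceding Lemma \ref{lem:langlands-quotient}, that lemma gives
$$
\grouphom^\star \pi_2 \;=\; \sum_\alpha m_\alpha\, \pi_{1,\alpha},
$$
where $\grouphom^\star \sigma_2 = \sum_\alpha m_\alpha\, \sigma_{1,\alpha}$ is the Levi-level decomposition and each $\pi_{1,\alpha}$ is the Langlands quotient of $\Ind_{P_1(k)}^{\G_1(k)} \sigma_{1,\alpha}$. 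In particular, every irreducible summand of $\grouphom^\star \pi_2$ has the form $\pi_{1,\alpha}$, and $\mm(\pi_2, \pi_{1,\alpha}) = \mm(\sigma_2, \sigma_{1,\alpha})$.

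On the Langlands-parameter side, I would invoke the standard fact that the parameter $\varphi_{\pi_i}$ of $\pi_i$ is obtained from the parameter $\varphi_{\sigma_i}$ of $\sigma_i$ by composition with $\lsup L M_i \hookrightarrow \lsup L \G_i$, and that the induced equality $Z_{\widehat{\G}_i}(\varphi_{\pi_i}) = Z_{\widehat{M}_i}(\varphi_{\sigma_i})$ identifies the component groups and enhancement data at the two levels. Compatibility of the $L$-maps coming from $\grouphom:M_1\to M_2$ and $\grouphom:\G_1\to\G_2$ then shows that the right-hand side of Conjecture \ref{conj:main}(b) for the pair $(\pi_2,\pi_{1,\alpha})$ is identical to that for $(\sigma_2,\sigma_{1,\alpha})$, and likewise that the matching-parameter condition of part (a) holds at the $\G$-level if and only if it holds at the $M$-level. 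This step reduces Conjecture \ref{conj:main} for $(\pi_2,\pi_{1,\alpha})$ to the statement of the conjecture for essentially tempered representations of $M_i(k)$.

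To bridge the remaining gap from essentially tempered to tempered, I would write $\sigma_2 = \sigma_{2,t}\otimes\psi_2$ for a tempered $\sigma_{2,t}$ and a character $\psi_2$ of $M_2(k)$, and note that $\grouphom^\star \sigma_2$ is the twist of $\grouphom^\star \sigma_{2,t}$ by $\psi_2\circ\grouphom$, preserving the multiplicities of the decomposition, while the $L$-parameter of $\sigma_{2,t}\otimes\psi_2$ differs from that of $\sigma_{2,t}$ only by a central cocycle, leaving centralizers and enhancements unchanged. The hypothesis (tempered case of Conjecture \ref{conj:main}, applied to the pair $(M_1,M_2)$) then supplies both parts of the conjecture for $(\pi_2,\pi_{1,\alpha})$. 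The main obstacle in this plan is the first identification: one needs the Langlands classification to transport component groups and the functorial action of $\lsup L \grouphom$ cleanly from the Levi setting to the full group. This is a standard feature of the enhanced Langlands correspondence, resting on the rigidity supplied by the strictly-positive-exponents condition, but it is the step where the bookkeeping requires real care.
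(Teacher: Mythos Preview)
Your proposal is correct and follows essentially the same route as the paper: realize $\pi_2$ as a Langlands quotient, apply Lemma~\ref{lem:langlands-quotient} to transport multiplicities to the Levi, and then identify parameters and component groups at the $\G$-level with those at the $M$-level (the paper cites \cite{dprasad:MVW-involution}*{\S5} for this last identification). Your explicit reduction from essentially tempered to tempered via a central twist is a detail the paper leaves implicit, but otherwise the arguments coincide.
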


\begin{proof}
Every representation $\pi_2$ of $\G_2(k)$
can be realized as a Langlands quotient of a standard
module $\Ind_{P_2(k)}^{\G_2(k)}\sigma_2$ for an essentially tempered
representation $\sigma_2$ of $M_2(k)$.
The Langlands parameter
$\map{\varphi_2}{W_F'}{\lsup L \G_2}$ for $\pi_2$
is the same as the Langlands parameter $\varphi_2$ for $\sigma_2$
considered as a map
$W_F'
\stackrel{\varphi_2}{\longrightarrow} \lsup L M_2
\longrightarrow \lsup L \G_2$.
The component groups of these parameters, and thus the representations
of these component groups,
correspond as discussed in \cite{dprasad:MVW-involution}*{\S5}.
Therefore, our result is a consequence of
Lemma \ref{lem:langlands-quotient}.
\end{proof}

\section{Consequences of the conjecture}
\label{sec:consequences}

If the group of connected components $\pi_0(Z_{\widehat\G_1}(\varphi_1))$
is known to be abelian,
as is the case when $\G_1$ is any of the groups
$\SL_n$, $\U_n$, $\SO_n$, and $\Sp_n$,
then our conjecture predicts that for any homomorphism
$\map{\grouphom}{\G_1}{\G_2}$ of connected reductive algebraic
groups that is an isomorphism up to center
(i.e., $\map{\bar\grouphom}{\G_1/\Z_1}{\G_2/\Z_2}$ is an isomorphism of algebraic groups, where
$\Z_i$ is the center of $\G_i$), any irreducible representation of $\G_2(k)$ when restricted
via $\grouphom$ to $\G_1(k)$ decomposes as a sum of irreducible representations
of $\G_1(k)$ with multiplicity $\leq 1$.

We note that 
by our earlier work \cite{adler-dprasad:mult-one},
we know that multiplicity is $\leq 1$ whenever the pair
$(\G_1,\G_2)$ is
$(\SL_n,\GL_n)$,
or (when the characteristic of $k$ is not two)
either
$(\OO_n, \GO_n)$
or
$(\Sp_n, \GSp_n)$.
In the next section, we will see that multiplicity $\leq 1$ also
holds for $(\U_n, \GU_n)$.
Gee and Ta\"{\i}bi
\cite{gee-taibi:arthurs-multiplicity-gsp4}*{Proposition 8.2.1}
show that multiplicity $\leq 1$ holds for the pair $(\SO_n,\GSO_n)$
if $k$ has characteristic zero.

\section{Generalities on restriction to unitary and special unitary groups}
\label{sec:unitary}
Let $E/k$ denote a separable quadratic extension of nonarchimedean local fields,
$N= N_{E/k}$ the norm map from $E\mult$ to $k\mult$,
and $E_1$ the kernel of this map.

Let $B$ denote a nondegenerate $E/k$-hermitian form
on some $E$-vector space $V$ of some dimension $r$.
Then we can form algebraic groups $\SU(V,B)$, $\U(V,B)$,
and $\GU(V,B)$ whose $k$-points
consist respectively of the elements of $\SL(r,E)$ that preserve $B$;
the elements of $\GL(r,E)$ that preserve $B$;
and
the elements of $\GL(r,E)$
that preserve $B$ up to a scalar in $k\mult$.
The group $\GU(V,B)$ comes equipped with a map
$\map{\simil }{\GU(V,B)}{\GL_1}$
called the \emph{similitude character}.
We will write our algebraic groups as
$\SU(r)$, $\U(r)$, and $\GU(r)$ when $V$ and $B$ are understood.

If $G$ is a group, $H$ is a subgroup, and $G/Z(G)H$ is cyclic, then
every irreducible representation of $G$ restricts to $H$ without multiplicity.
How far can we exploit this fact?

\begin{thm}
\label{thm:elementary}
Let $p$ be the residual characteristic of $k$.
\begin{enumerate}[(a)]
\item
\label{item:GU-U}
All irreducible representations of $\GU(r)(k)$
decompose without multiplicity upon restriction to $\U(r)(k)$.
Such a restriction is irreducible when $r$ is odd,
and has at most two components when $r$ is even.
\item
All irreducible representations of $\U(r)(k)$ 
decompose without multiplicity upon restriction to $\SU(r)(k)$
when $r$ is coprime to $p$,
or $k=\Q_p$ ($p$ odd).
\item
All irreducible representations of $\GU(r)(k)$
decompose without multiplicity upon restriction to $\SU(r)(k)$
when $r$ is odd and coprime to $p$.
\end{enumerate}
\end{thm}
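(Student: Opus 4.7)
All three parts are applications of the criterion stated just before the theorem: if $H\subset G$ with $G/Z(G)H$ cyclic, then every irreducible representation of $G$ restricts without multiplicity to $H$, and decomposes into at most $[G:Z(G)H]$ pieces.  In each case I would identify $G/Z(G)H$ explicitly.  For part (\ref{item:GU-U}) take $G=\GU(r)(k)$, $H=\U(r)(k)$; since $H=\ker\simil$ and $\simil$ sends the center $Z(G)(k)=E\mult$ (scalars) to $N(E\mult)$ via the norm, the similitude character induces an injection $G/Z(G)H \hookrightarrow k\mult/N(E\mult)$, a group of order at most $2$.  The standard identity $N(\det g)=\simil(g)^r$ shows $\simil(g)^r\in N(E\mult)$; for odd $r$ the quotient $k\mult/N(E\mult)$ is killed by $r$, forcing $\simil(g)\in N(E\mult)$, hence $G=Z(G)H$ and irreducible restriction, while for even $r$ the quotient $G/Z(G)H$ has order at most $2$.

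For (b) take $G=\U(r)(k)$, $H=\SU(r)(k)$.  The determinant is surjective onto $E_1$ with kernel $H$, and sends $Z(G)(k)=E_1$ (scalars) to $E_1^r$, inducing $G/Z(G)H\cong E_1/E_1^r$; the task becomes showing this finite abelian group is cyclic.  I would analyse $E_1$ as a compact abelian group: its torsion subgroup is contained in the roots of unity of $E$ and so is finite cyclic, while its pro-$p$ part is a free $\ZZ_p$-module of rank one when $k=\Q_p$ (by the logarithm on principal units, using $[E:\Q_p]=2$).  When $\gcd(r,p)=1$ the $r$-th power map is an isomorphism on the pro-$p$ part, so $E_1/E_1^r$ is a cyclic quotient of the torsion.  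When $k=\Q_p$ with $p$ odd, the torsion of $E_1$ has order prime to $p$, and $E_1/E_1^r$ decomposes as a product of two cyclic groups of coprime orders, hence is cyclic.

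For (c) take $G=\GU(r)(k)$, $H=\SU(r)(k)$ with $r$ odd.  Part (\ref{item:GU-U}) yields $G=Z(G)\,\U(r)(k)$; combined with $Z(G)(k)\cap\U(r)(k)=E_1$ (an element $zs$ with $z\in Z(G)(k)$, $s\in\SU(r)(k)$ lies in $\U(r)(k)$ iff $\simil(zs)=N(z)=1$), this gives
\[
G/Z(G)H \;\cong\; \U(r)(k)/E_1\,\SU(r)(k) \;\cong\; E_1/E_1^r,
\]
which is cyclic by (b) since $r$ is odd and coprime to $p$.  The main obstacle is the structural analysis in (b): one must know that $E_1$ has $\ZZ_p$-rank exactly one over $\Q_p$ and that its torsion is prime to $p$, both of which are standard but need to be verified in the ramified and unramified cases separately.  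Once $E_1/E_1^r$ is understood, parts (\ref{item:GU-U}) and (c) reduce to short group-theoretic identifications of the obstruction quotient $G/Z(G)H$ and a one-line appeal to the cyclicity criterion.
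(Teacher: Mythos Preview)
Your proposal follows the paper's approach closely: the same cyclicity criterion applied to $G/Z(G)H$, identified with $\Image(\simil)/N(E^\times)$ in (a) and with $E_1/E_1^r$ in (b), with (c) obtained by combining the two.  In (a) you bound $\Image(\simil)$ via the identity $N(\det g)=\simil(g)^r$, whereas the paper computes $\Image(\simil)$ exactly by invoking the classification of Hermitian forms over a local field (two forms are equivalent iff their discriminants agree in $k^\times/N(E^\times)$); your weaker bound already suffices for the theorem as stated, and is arguably more direct.

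One caution on (b), which you yourself flag as needing verification: the assertion that for $k=\Q_p$ (with $p$ odd) the torsion of $E_1$ has order prime to $p$, equivalently that the pro-$p$ part of $E_1$ is a \emph{free} $\ZZ_p$-module of rank one, fails in one case.  For $p=3$ and the ramified extension $E=\Q_3(\zeta_3)=\Q_3(\sqrt{-3})$ one has $N(\zeta_3)=\zeta_3\cdot\zeta_3^{-1}=1$ and $\zeta_3\in 1+P_E$, so the pro-$p$ part $A=E_1\cap(1+P_E)$ contains $3$-torsion; in fact $A\cong\ZZ_3\times\ZZ/3$ and hence $E_1/E_1^3\cong(\ZZ/3)^2$ is not cyclic.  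The paper's proof makes the same tacit assumption (``$A/A^r$ is cyclic if $k=\Q_p$'') and thus shares this gap; the cyclicity criterion alone does not settle the case $k=\Q_3$, $E=\Q_3(\sqrt{-3})$, $3\mid r$.
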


\begin{proof}
\begin{enumerate}[(a)]
\item
Let $\map{\simil }{\GU(r)}{\GL(1)}$ denote the similitude character.
Clearly the group $\GU(r)$ contains the scalar matrices $eI_r$ for all
$e\in E\mult$,
and for such matrices the similitude is
$N_{E/k}(e)$.
Therefore,
the image under $\simil $ of the center of $\GU(r)(k)$
is
$N_{E/k}(E\mult)$,
so $\simil $ thus gives an isomorphism
$$
\frac{\GU(r)}{Z(\GU(r))\U(r)}
\stackrel{\sim}{\longrightarrow}
\frac{\Image(\simil )}{N(E\mult)}.
$$
A scalar $a\in k\mult$
is a similitude for some linear transformation $g$ of $V$
if and only if for all $v,w \in V$, we have that $B(gv,gw) = a\cdot B(v,w)$.
That is, $B$ and $a\cdot B$ are equivalent Hermitian forms.
It is known that two Hermitian forms
over a non-archimedean local field $k$
are equivalent if and only if their discriminants,
which are elements of $k\mult/ N(E\mult)$,
are the same.
Therefore,
$B$ and $aB$ are equivalent if and only if
$\disc B = a^r \disc B$ in $k\mult/N(E\mult)\cong \ZZ/2$.
Thus, if $r$ is even, then $B$ and $aB$ are equivalent
for $a$ an arbitrary element of $k\mult$,
but if $r$ is odd, then $a$ must lie in $N(E\mult)$.
Thus,
$$
\frac{\GU(r)}{\Z(\GU(r)) \U(r)}
\cong
\ZZ/2
\quad \text{or} \quad
\{1\}.
$$
\item
Let $R_E$ and $P_E$ denote the ring of integers and
prime ideal for $E$.
The determinant character gives us an isomorphism
$$
\det \colon \frac{\U(r)(k)}{Z(\U(r))(k)\SU(r)(k)}
\stackrel{\sim}{\longrightarrow}
\frac{E_1}{(E_1)^r}.
$$
As an abstract group, $E_1$ inherits a direct product decomposition
from $R_E\mult \cong k_E\mult \times (1+P_E)$.
Thus, $E_1$ is a direct product of a cyclic group (of order coprime to $p$)
and a pro-$p$-group $A$,
implying that $E_1 / E_1^r$ is cyclic if and only $A/A^r$ is cyclic.
But this latter quotient is trivial if $r$ is coprime to $p$,
and is cyclic if $k=\Q_p$ ($p$ odd).
\item
This follows from the previous two parts of the theorem.
\qedhere
\end{enumerate}
\end{proof}

\section{An example of multiplicity upon restriction}
\label{sec:example-mult}
Let $\varpi$ be a uniformizer of $k$, $E/k$ an unramified quadratic
extension, $R_k$ and $R_E$ the rings of integers in $k$ and $E$,
and $\ff$ and $\ff_E$ the residue fields.
Let $V$ be a $4d$-dimensional hermitian space over $E$,
with hyperbolic basis
$\{e_1, f_1, \dots, e_{2d}, f_{2d}\}$.
Thus, $\langle e_i,f_i\rangle = 1$ for all $1\leq i \leq 2d$, and all the other products being $0$.
Let $\U(V)$ be the corresponding unitary group.
Define the lattice $\LL$ in $E$ by
$$
\LL =
\Span_{R_E}\{
	e_1,f_1,\ldots,e_d,f_d,
	\varpi e_{d+1},f_{d+1},\ldots, \varpi e_{2d}, f_{2d}
	\}.
$$
Clearly,
$\LL^\vee
:=
\{v\in V | \text{$\langle v,\ell\rangle\in R_E$ for all $\ell\in\LL$}\}$
is given by
$$
\LL^\vee =
\Span_{R_E}\{
	e_1,f_1,\ldots,e_d,f_d,
	e_{d+1},\varpi\inv f_{d+1},\ldots, e_{2d}, \varpi\inv f_{2d}
	\}.
$$
Observe that
$$
\varpi \LL^\vee \subseteq \LL \subseteq \LL^\vee,
$$
and $\LL^\vee/\LL$ and $\LL/\varpi\LL^\vee$
are $2d$-dimensional hermitian spaces over $\ff_E$
with natural hermitian structures.
For example, given two elements $\ell_1$ and $\ell_2$ in $\LL^\vee$
with images 
$\overline\ell_1$ and $\overline\ell_2$ in $\LL^\vee/\LL$, the hermitian
structure on $\LL^\vee/\LL$ is defined by 
having $\langle \overline\ell_1,\overline\ell_2\rangle$ as
the image of 
$\varpi\langle\ell_1,\ell_2\rangle$
(which belongs to $R_E$)
in $\ff_E$.

Define $K=\U(\LL)$ to be the stabilizer of the lattice $\LL$ in $\U(V)$,
i.e.,
$\U(\LL) =  \{ g \in \U(V) | \text{$g\ell \in \LL$ for all $\ell\in\LL$} \}$.
If an element of $\U(V)$ preserves $\LL$, then it clearly preserves
$\LL^\vee$ and $\varpi\LL$, giving a map
$\abmap{\U(\LL)}{\U(2d,\ff)\times \U(2d,\ff)}$.
Similarly, we have a map
$\abmap{\SU(\LL)}{\Special(\U(2d)\times \U(2d))(\ff)}$.

Let $g_0\in \GU(V)$ be defined by 
(for $i \leq d$)
$$
e_i \mapsto e_{d+i},
\quad
f_i \mapsto \varpi\inv f_{d+i},
\quad
e_{d+i} \mapsto \varpi\inv e_i,
\quad
f_{d+i} \mapsto f_i,
$$
Clearly,
$g_0$ has similitude factor $\varpi\inv$,
and $g_0\LL = \LL^\vee$.
Therefore, we have
$$
g_0\U(\LL)g_0\inv =\U(\LL^\vee).
$$
Thus conjugation by $g_0$ induces an isomorphism of $\U(\LL)$
into $\U(\LL^\vee)$, making the following diagram commute:
$$
\begin{xy}
\xymatrix{
\U(\LL) \ar[r]^{g_0} \ar[d] & \U(\LL^\vee) \ar[d] \\
\U(2d,\ff)\times\U(2d,\ff) \ar[r]^j &
	\U(2d,\ff)\times\U(2d,\ff) 
}
\end{xy}
$$
where $j(x,y) = (y,x)$.

\begin{thm}
\label{thm:mult-two}
Let $\rho$ be any irreducible cuspidal representation of $\U(2d)(\ff)$
such that $\rho\not\cong\rho\chi$, where $\chi$ is a quadratic character
of $\U(2d)(\ff)$ trivial on $\SU(2d)(\ff)$.
Let $\sigma := \infl(\rho\otimes\rho\chi)$
denote the inflation of $\rho\otimes\rho\chi$
from $(\U(2d)\times\U(2d))(\ff)$ to $\U(\LL)$
and let $\pi = \cInd_{\U(\LL)}^{\U(V)} \sigma$.
Then $\pi\oplus \pi^{g_0}$ extends to an irreducible representation
$\widetilde\pi$ of $\GU(V)$ whose restriction to $\SU(V)$
decomposes with multiplicity two.
\end{thm}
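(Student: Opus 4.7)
The plan is to prove the theorem in four steps, starting with a key structural observation that makes everything work: $\U(\LL) = \U(\LL^\vee)$ as subgroups of $\U(V)$. Indeed, if $u\in\U(V)$ preserves the hermitian form, then $u\LL^\vee = (u\LL)^\vee$, so $u\LL = \LL$ forces $u\LL^\vee = \LL^\vee$ and conversely. Consequently, although $g_0\in\GU(V)$ satisfies $g_0\LL = \LL^\vee \neq \LL$ as lattices, the identity $g_0\U(\LL)g_0^{-1} = \U(\LL^\vee) = \U(\LL)$ shows that $g_0$ normalizes the parahoric $\U(\LL)$, and by the commutative diagram given above, the induced automorphism on the reductive quotient $\U(2d)(\ff)\times\U(2d)(\ff)$ is the swap $j$.

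Next I verify that $\pi$ is an irreducible supercuspidal and that $\pi\not\cong\pi^{g_0}$. Irreducibility is standard depth-zero theory, since $\rho\otimes\rho\chi$ is cuspidal on the reductive quotient (the tensor product of cuspidals on a direct product is cuspidal) and $\U(\LL)$ equals its own normalizer in $\U(V)$. Because $g_0$ normalizes $\U(\LL)$, we have $\pi^{g_0} \cong \cInd_{\U(\LL)}^{\U(V)}\sigma^{g_0}$, where $\sigma^{g_0}$ inflates from $\rho\chi\otimes\rho$; the hypothesis $\rho\not\cong\rho\chi$ forces $\rho\otimes\rho\chi\not\cong\rho\chi\otimes\rho$ (a character comparison at a point of the form $(x,e)$ shows that equality of these two representations implies $\rho\cong\rho\chi$), so $\sigma\not\cong\sigma^{g_0}$ and hence $\pi\not\cong\pi^{g_0}$.

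To construct $\widetilde\pi$, set $\widetilde K := \U(\LL)\cdot Z(\GU(V))\cdot\langle g_0\rangle$; this is a compact-mod-center subgroup of $\GU(V)$, with $g_0^2 = \varpi^{-1}I\in Z(\GU(V))$ by direct computation. Extend $\sigma$ to $\U(\LL)\cdot Z(\GU(V))$ by picking any character of $Z(\GU(V))$ lifting the central character of $\sigma$, then induce to $\widetilde K$; the induced representation $\tilde\sigma$ is irreducible by Mackey because $\sigma\not\cong\sigma^{g_0}$. Setting $\widetilde\pi := \cInd_{\widetilde K}^{\GU(V)}\tilde\sigma$, a single-double-coset Mackey calculation (note $\U(V)\cdot\widetilde K = \GU(V)$, since $g_0$ represents the nontrivial coset of $\GU(V)$ modulo $\U(V)\cdot Z(\GU(V))$) gives $\widetilde\pi|_{\U(V)} = \pi\oplus\pi^{g_0}$; irreducibility of $\widetilde\pi$ as a $\GU(V)$-representation follows because $g_0$ swaps its two $\U(V)$-constituents.

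The heart of the proof, and the only step that genuinely uses the hypothesis that $\chi$ is \emph{quadratic}, is the multiplicity-two assertion on $\SU(V)$. Since diagonal elements in $\U(\LL)$ surject onto $E_1$ via the determinant, $\U(V) = \SU(V)\cdot\U(\LL)$, and Mackey yields
$$
\pi|_{\SU(V)} \cong \cInd_{\SU(\LL)}^{\SU(V)}\sigma|_{\SU(\LL)},\qquad \pi^{g_0}|_{\SU(V)} \cong \cInd_{\SU(\LL)}^{\SU(V)}\sigma^{g_0}|_{\SU(\LL)}.
$$
The entire question thus reduces to showing $(\rho\otimes\rho\chi)|_{S} \cong (\rho\chi\otimes\rho)|_{S}$ as representations of $S := \Special(\U(2d)\times\U(2d))(\ff)$. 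These two representations differ on the full group $G := \U(2d)(\ff)\times\U(2d)(\ff)$ by the character $(x,y)\mapsto\chi(x)\chi(y)^{-1}$. Writing $\chi = \chi'\circ\det$ with $\chi'$ quadratic, on $S$ one has $\chi(x)\chi(y) = \chi'(\det x\det y) = \chi'(1) = 1$, so $\chi(y) = \chi(x)^{-1} = \chi(x)$ by quadraticity, and the two representations restrict to the \emph{same} $S$-representation. Consequently $\pi|_{\SU(V)} \cong \pi^{g_0}|_{\SU(V)}$, so $\widetilde\pi|_{\SU(V)} = \pi|_{\SU(V)}\oplus\pi^{g_0}|_{\SU(V)} = 2\,\pi|_{\SU(V)}$, which exhibits the multiplicity two. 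The conceptual obstacle is recognizing the identification $\U(\LL)=\U(\LL^\vee)$, so that $\pi$ and $\pi^{g_0}$ live on the \emph{same} parahoric and can meaningfully be compared on $\SU(\LL)$; once this is in hand, the quadratic-character computation on $S$ is short and clean.
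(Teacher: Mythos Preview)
Your proof is correct and follows essentially the same approach as the paper's: both arguments establish that $\pi$ is irreducible supercuspidal with $\pi\not\cong\pi^{g_0}$ via Moy--Prasad theory, extend $\pi\oplus\pi^{g_0}$ to $\GU(V)$, and then use Mackey (via $\U(\LL)\SU(V)=\U(V)$) together with the isomorphism $\rho\otimes\rho\chi\cong\rho\chi\otimes\rho$ on $\Special(\U(2d)\times\U(2d))(\ff)$ to obtain multiplicity two. You supply more explicit detail at each step---the identification $\U(\LL)=\U(\LL^\vee)$ showing $g_0$ normalizes the parahoric, the concrete construction of $\widetilde\pi$ by inducing from $\U(\LL)\cdot Z(\GU(V))\cdot\langle g_0\rangle$, and the quadratic-character computation on $S$---where the paper simply cites Moy--Prasad or asserts the relevant fact.
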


\begin{proof}
From Moy-Prasad \cite{moy-prasad:jacquet}*{Proposition 6.6},
$\pi$ is an irreducible, supercuspidal representation of $\U(V)$.
Let $\pi$ also denote one of its extensions to
$Z(\GU(V)) \U(V)$.
From the last sentence of \cite{moy-prasad:k-types}*{Theorem 5.2},
$\pi^{g_0} \not\cong\pi$, so the sum $\pi\oplus \pi^{g_0}$
extends to an irreducible (also supercuspidal) representation
$\widetilde\pi$
of $\GU(V)$.
By the induction-restriction formula (observe that by the explicit description of $\U(\LL)$, $\det: \U(\LL)
\rightarrow E_1$ is surjective, and hence $\U(\LL)\SU(V)=\U(V)$),
\begin{align*}
\pi|_{\SU(V)}
& = \cInd_{\SU(\LL)}^{\SU(V)}
	(\sigma|_{\SU(\LL)}), \\
\pi^{g_0}|_{\SU(V)}
& = \cInd_{\SU(\LL)}^{\SU(V)}
	(\sigma^{g_0}|_{\SU(\LL)}) .
\end{align*}
Since $\rho\otimes\rho\chi \cong \rho\chi\otimes \rho$ as representations of
$\Special(\U(2d)\times\U(2d))(\ff)$,
we have
that $\sigma\cong\sigma^{g_0}$ as representations of
$\SU(\LL)$, so
\[
\widetilde\pi|_{\SU(V)}
= (\pi\oplus \pi^{g_0})|_{\SU(V)} 
= 2 \cdot \cInd_{\SU(\LL)}^{\SU(V)}
	(\sigma|_{\SU(\LL)}).
\qedhere
\]
\end{proof}

In order to have an example of multiplicity at least two,
it is thus sufficient to find a representation $\rho$ of $\U(2d)(\ff)$
such that $\rho\not\cong\rho\chi$,
as in the theorem.
In fact, most irreducible Deligne-Lusztig
cuspidal representations of $\U(2d)(\ff)$ will have this property,
as they restrict irreducibly to $\SU(2d)(\ff)$.

\begin{rem}
In a future work, we will expand upon the example given in the Theorem,
whose essence is the following.
Given a supercuspidal representation of $\G_2(k)$
whose restriction to $\G_1(k)$ has regular components
(in the sense of 
Kaletha \cite{kaletha:regular-sc}),
then the components occur with multiplicity one.
(Nevins
\cite{nevins:restricting-toral-supercuspidals}
already verified this for many cases.)
If the components are not regular, then higher
multiplicities can occur.

Our example  begins with $\rho$, an irreducible cuspidal representation
of $\U(2d)(\ff)$
that arises via Deligne-Lusztig induction from a character
$\theta$ of the group of $\ff$-points of an anisotropic torus
$\sfT \subset\U(2d)$.
Suppose also that the restriction of $\theta$ to $\sfT(\ff)\cap \SU(2d)(\ff)$
remains regular
so that the restriction of $\rho$ to $\SU(2d)(\ff)$ remains irreducible.
The torus $\sfT\times \sfT \subset \U(2d)\times\U(2d)$
lifts to give an unramified torus $T\subset \GU(V)$,
and the character $\theta\otimes\theta\chi$ can be inflated
and extended to give a character $\Theta$ of $T$.
The representation $\widetilde\pi$ of $\GU(V)$
that we have constructed in the theorem is 
a regular supercuspidal representation in the sense of
Kaletha \cite{kaletha:regular-sc},
but the irreducible components of its restriction
to $\SU(V)$ are not since our
character $\Theta$ of $T$, when restricted to $T \cap \SU(V)$,
is not regular because of the presence
of the element $g_0\in\GU(V)$.

For depth-zero supercuspidal representations of quasi-split unitary
groups, the parahoric that we have used is the only one that can
lead to higher multiplicities.
\end{rem}

\section{Generalities on constructing higher multiplicities}
\label{sec:generalities}
In this section, we discuss some generalities underlying
the example of the previous section, which will be useful
for constructing higher multiplicities in general.

Let $G$ be a group, and $N$ a normal subgroup
of $G$ such that
$$
G/N \cong \ZZ/2 \oplus \ZZ/2.
$$
A good example to keep in mind is $G=Q_8=\{\pm 1, \pm i, \pm j, \pm k\}$,
the quaternion group of order $8$, and $N=\{\pm 1\}$.
Let $\omega_1$ and $\omega_2$ be two distinct, nontrivial
characters of $G$ that are trivial on $N$.

Suppose $\pi$ is an irreducible representation of $G$ such that
$$
\pi \cong \pi\otimes \omega_1 \cong \pi\otimes\omega_2.
$$
By \cite{gelbart-knapp:sl2}*{\S2},
$\pi|_N$
must be one of the following:
\begin{enumerate}
\item
a sum of four inequivalent, irreducible representations, or
\item
a sum of two copies of an irreducible representation.
\end{enumerate}
Deciding which of these two options we have is a subtle question,
and this is what we wish to do here.

Let $N_1 = \ker\{\map{\omega_1}{G}{\ZZ/2}\}$,
so that 
$G\supset N_1\supset N$.
Since $\pi \cong \pi\otimes \omega_1$,
$\pi|_{N_1} = \pi_1\oplus\pi_2$, a sum of inequivalent,
irreducible representations.
Further, since $\pi\cong\pi\otimes\omega_2$,
we have
$$
(\pi_1\oplus\pi_2) \cong (\pi_1\oplus\pi_2)\otimes \omega_{21},
$$
where $\omega_{21} = \omega_2|_{N_1}$,
a nontrivial character of $N_1$ of order 2.
Therefore, we have the following two possibilities
\begin{enumerate}[(i)]
\item
$\pi_1 \cong \pi_1\otimes \omega_{21}$,
\item
$\pi_2 \cong \pi_1\otimes \omega_{21}$,
\end{enumerate}
In case (i), $\pi_1$, which is an irreducible representation of $N_1$,
decomposes when restricted to $N$ into
two inequivalent irreducible representations,
and therefore $\pi$ has
at least two inequivalent irreducible subrepresentations
when restricted to $N$, hence in case (i),
$$
\pi|_N = \text{a sum of $4$ inequivalent, irreducible representations}.
$$
In case (ii), clearly $\pi|_N$ is twice an irreducible representation.

How does one then construct an example of an irreducible representation
$\pi$ of $G$ for which
$\pi|_{N}$ is twice an irreducible representation?
We start with an irreducible representation $\pi_1$ of $N_1$
such that the following equivalent conditions hold:
\begin{enumerate}[(i)]
\item
$\pi_1$ does not extend to a representation of $G$;
\item
$\pi_1^g \not\cong \pi_1$ for some $g\in G$.
\end{enumerate}
Given such a representation $\pi_1$ of $N_1$,
next we must ensure that
$$
\pi_1^g \cong \pi_1\otimes \omega_{21} \text{ for } g \in G \setminus N.
$$
If we understand $N_1$,
together with the action of $G$ on
the representations of $N_1$,
then the condition
$$
\pi_1^g \cong \pi_1\otimes \omega_{21} \not\cong \pi_1
$$
is checkable,
constructing an irreducible representation
$\pi = \Ind_{N_1}^G \pi_1$ of $G$ such that
$$
\pi|_N= 2 \pi_1|_N.
$$

\textbf{Acknowledgements.}
As mentioned in the introduction,
most of this work is several years old.
In the mean time, there is the recent work of
K. Choiy \cite{choiy:multiplicity-restriction}
formulating the conjectural multiplicity formula
quite independently of us,
and 
proving it for tempered representations
under the assumption that the Langlands correspondence
has many of the expected properties.

For their hospitality,
we thank the CIRM (Luminy),
and the first-named author also thanks
the Tata Institute of Fundamental Research (Mumbai). We thank Hengfei Lu for his comments on this work.
The final writing of this work was supported by a grant of
the Government of the Russian Federation
for the state support of scientific research carried out
under the supervision of leading scientists,
agreement 14.W03.31.0030 dated 15.02.2018.

\begin{bibdiv}
\begin{biblist}

\bib{adler-dprasad:mult-one}{article}{
  author={Adler, Jeffrey D.},
  author={Prasad, Dipendra},
  title={On certain multiplicity one theorems},
  journal={Israel J. Math.},
  volume={153},
  date={2006},
  pages={221-245},
  eprint={arXiv:math.RT/0410002},
}

\bib{choiy:multiplicity-restriction}{article}{
  author={Choiy, Kwangho},
  title={On multiplicity in restriction of tempered representations of $p$-adic groups},
  journal={Math. Z.},
  volume={291},
  number={1--2},
  date={2019-02},
  pages={449\ndash 471},
  eprint={arXiv:1306.6118},
}

\bib{gee-taibi:arthurs-multiplicity-gsp4}{article}{
  author={Gee, Toby},
  author={Ta{\"\i }bi, Olivier},
  title={Arthur's multiplicity formula for $\mathrm {GSp}_4$ and restriction to $\mathrm {Sp_4}$},
  date={2018-07-11},
  eprint={arXiv:1807.03988},
}

\bib{gelbart-knapp:sl2}{article}{
  author={Gelbart, Stephen},
  author={Knapp, Anthony W.},
  title={$L$-indistinguishability and $R$ groups for the special linear group},
  journal={Adv. in Math.},
  volume={43},
  date={1982},
  number={2},
  pages={101\ndash 121},
  issn={0001-8708},
  review={\MR {644669 (83j:22009)}},
}

\bib{kaletha:regular-sc}{article}{
  author={Kaletha, Tasho},
  title={Regular supercuspidal representations},
  date={2016-02-09},
  eprint={arXiv:1602.03144},
}

\bib{keys:unitary-even}{article}{
  author={Keys, C. David},
  title={$L$-indistinguishability and $R$-groups for quasisplit groups: unitary groups in even dimension},
  journal={Ann. Sci. \'Ecole Norm. Sup. (4)},
  volume={20},
  date={1987},
  number={1},
  pages={31--64},
  issn={0012-9593},
  review={\MR {892141 (88m:22042)}},
}

\bib{moy-prasad:k-types}{article}{
  author={Moy, Allen},
  author={Prasad, Gopal},
  title={Unrefined minimal $K$-types for $p$-adic groups},
  journal={Invent. Math.},
  volume={116},
  date={1994},
  number={1--3},
  pages={393\ndash 408},
  issn={0020-9910},
  review={\MR {1253198 (95f:22023)}},
}

\bib{moy-prasad:jacquet}{article}{
  author={Moy, Allen},
  author={Prasad, Gopal},
  title={Jacquet functors and unrefined minimal $K$-types},
  journal={Comment. Math. Helv.},
  volume={71},
  date={1996},
  number={1},
  pages={98\ndash 121},
  issn={0010-2571},
  review={\MR {1371680 (97c:22021)}},
}

\bib{nevins:restricting-toral-supercuspidals}{article}{
  author={Nevins, Monica},
  title={Restricting toral supercuspidal representations to the derived group, and applications},
  journal={J. Pure Appl. Algebra},
  volume={219},
  date={2015},
  number={8},
  pages={3337--3354},
  issn={0022-4049},
  review={\MR {3320223}},
  doi={10.1016/j.jpaa.2014.10.018},
  eprint={arXiv:1409.3617},
}

\bib{dprasad:MVW-involution}{article}{
  author={Prasad, Dipendra},
  title={Generalizing the MVW involution, and the contragredient},
  journal={Trans. Amer. Math. Soc.},
  volume={372},
  number={1},
  date={2019-07-01},
  pages={615\ndash 633},
  doi={https://doi.org/10.1090/tran/7602},
  eprint={arXiv:1705.03262},
}

\bib{silberger:isogeny-restriction}{article}{
  author={Silberger, Allan J.},
  title={Isogeny restrictions of irreducible admissible representations are finite direct sums of irreducible admissible representations},
  journal={Proc. Amer. Math. Soc.},
  volume={73},
  date={1979},
  number={2},
  pages={263\ndash 264},
  issn={0002-9939},
  review={\MR {516475 (80f:22017)}},
}

\end{biblist}
\end{bibdiv}
\end{document}